\long\def\comment#1{}
\def\imod#1{\allowbreak\mkern10mu({\operator@font mod}\,\,#1)}
\newtheorem{theo}{{\bf Theorem}}[section]
\newtheorem{defn}[theo]{{\bf Definition}}
\newtheorem{cor}[theo]{{\bf Corollary}}
\newtheorem{lem}[theo]{{\bf Lemma}}
\newtheorem{prop}[theo]{{\bf Proposition}}
\newtheorem{rem}{Remark}
\title[Third moment of the remainder term for Heisenberg
manifolds]{Third moment of the remainder term in Weyl's law for
Heisenberg manifolds}
\author{Mahta Khosravi}
\address{ School of Mathematics, Institute for Advanced Study, Princeton,
NJ 08540, USA} \email{khosravi@math.ias.edu; \!\!\!\! {\it Current E-mail address:}\,\,\,khosravi@math.jhu.edu}
\thanks{The author was supported by an NSERC postdoctoral fellowship
and partially by an NSF grant DMS 0111298 through
the Institute for advanced Study.}
\begin{document}


\begin{abstract}
Let $R(t)$ be the remainder term in Weyl's law for a 3-dimensional
Riemannian Heisenberg manifold with a certain {\lq arithmetic\rq}
metric. We prove a third moment result stating that $ \int_1^T
R(t)^3 dt =d_3\, T^{{13}/{4}}+O_\delta(T^{45/14+\delta}) $, where
$d_3$ is a specific positive constant which can be evaluated
explicitly. This proves the asymmetric behavior of $R(t)$ about
the $t$-axis. This result is consistent with the conjecture of
Petridis and Toth stating that
$R(t)=O_{\delta}(t^{{3}/{4}+\delta})$. Similar results hold for
\mbox{2$n$+1}-dimensional Heisenberg manifolds with arithmetic
metrics.

\end{abstract}

\maketitle
\section{Introduction}

Let $(M, g)$ be a closed $n$-dimensional Riemannian manifold with
metric $g$ and Laplace-Beltrami operator $\Delta$. We denote its
spectral counting function by $N(t)$, defined as the number of the
eigenvalues of $\Delta$ not exceeding $t$. A celebrated theorem of
H\"{o}rmander \cite{Ho} asserts that as $t\rightarrow \infty$,

\begin{eqnarray}
N(t)=\frac{\hbox{vol}(B_n) \hbox{vol}(M)}{(2\pi)^n}t^{{n}/{2}} +
O(t^{{(n-1)}/{2}}),
\end{eqnarray}
where $\hbox{vol}(B_n)$ is the volume of the $n$-dimensional unit
ball.

By considering the unit sphere, it is straightforward to show that
the estimate for the remainder term in H\"{o}rmander's theorem
defined by
\begin{eqnarray}
R(t)=N(t)-\frac{\hbox{vol}(B_n)
\hbox{vol}(M)}{(2\pi)^n}t^{{n}/{2}},
\end{eqnarray}
is in general sharp. However, the question of determining the
optimal bound for this remainder term for any given manifold is a
difficult one and depends on the properties of the associated
geodesic flow. In many cases, this is an open problem.
Nevertheless, for certain types of manifolds some improvements
have been obtained and in a few cases the conjectured optimal
bound has been attained (see \cite{BG}, \cite{Be}, \cite{Bl},
\cite{Fr}, \cite{Go}, \cite{Hu}, \cite{Iv}, \cite{KP} and
\cite{Vo}).

\comment{The results obtained in this direction can be classified
in three categories: (i) The first type of results deal with the
upper bound for the rate of the growth of the remainder term( i.e.
the $O$-results). (ii) The second type deal with finding a lower
bound for this growth( i.e. the $\Omega$-results). (iii) Finally,
the third type are results about the averages and the moments of
the remainder term.}

The results obtained in this direction can be separated into two
categories: (i) upper and lower bounds for the rate of growth of
the remainder term (i.e. the $O$-results and $\Omega$-results
respectively); (ii) the distribution of the remainder term about
the $t$-axis and averages and moments of the remainder term.

In this article, we address a result of type (ii) on Heisenberg
manifolds. Following our previous work in \cite{KT} where we
evaluated the second moment of the remainder term in Weyl's law
for Heisenberg manifolds, we investigate the third moment of this
remainder term.

We first review some well known results. For manifolds with
completely integrable geodesic flows satisfying some clean          
intersection hypothesis, Duistermaat and Guillemin \cite{DG} have
proven that $R(t)=o(t^{(n-1)/2}).$ For generic convex surfaces of
revolution, Colin de Verdi\`{e}re \cite{Co} showed that
$R(t)=O(t^{1/3}).$ The simplest compact manifold with integrable
geodesic flow is the 2-torus ${\mathbb{T}}^2$.  Hardy's conjecture
for ${\mathbb{T}}^2$ \cite{Ha} asserts that

\begin{displaymath}
R(t)=O_{\delta}(t^{\frac{1}{4}+\delta}),
\end{displaymath}
where here, and hereafter in this article, $\delta$ is any
arbitrary small positive number and the $O_\delta$ notation
indicates the implied constant may depend on the value of
$\delta$. Hardy further proved that for ${\mathbb{T}}^2$ this is
the best possible upper bound. To be more precise, he proved the
following lower bound results:
\begin{displaymath}
R(t)=\Omega_{-}((t\log t)^{\frac{1}{4}}) \;\; \hbox{ and
}\;\; R(t)=\Omega_{+}(t^{\frac{1}{4}}).
\end{displaymath}
These lower bound results have since been improved and the best
known result today is due to Soundarajan \cite{Sou} who proved
that
\begin{displaymath}
R(t)=\Omega\left((t\log t)^{1/4}(\log_2
t)^{(3/4)(2^{1/3}-1)}(\log_3 t)^{-5/8}\right),
\end{displaymath}
where $\log_2 t=\log\log t$ and $\log_3 t = \log\log_2 t$ . Moving
to the moment results on flat tori, there is a classical result of
Cram\'{e}r \cite{Cr} which states that for ${\mathbb{T}}^2$

\begin{displaymath}
\int_1^T {R(t)}^2 dt= c_2\, T^{\frac{3}{2}}+
O_\delta(T^{\frac{5}{4}+\delta}),
\end{displaymath}
as $T\rightarrow \infty$ where $c_2=\frac{1}{6\pi^3}\sum_1^\infty
\frac{r(n)^2}{n^{3/2}}$ with $r(n)=\#\{(a, b)\in {\mathbb{Z}}^2;
n=a^2+b^2\}$. This result is consistent with Hardy's conjecture.
Tsang \cite{Ts} has evaluated the third and fourth moments of the
remainder term of Weyl's law on flat tori proving that for some
specific negative constant $c_3$ and positive constant $c_4$
\begin{displaymath}
\int_1^T {R(t)}^3 dt= c_3\,
T^{\frac{7}{4}}+O(T^{\frac{7}{4}-\epsilon}),
\end{displaymath}and
\begin{displaymath}
\int_1^T {R(t)}^4 dt= c_4\, T^2+O(T^{2-\epsilon}),
\end{displaymath}
as $T\rightarrow \infty$ for some $\epsilon >0$. The fifth moment
result on flat tori is due to the author \cite{K} who has recently
proven that
$$\int_1^T R(t)^5 dt =c_5
\:T^{\frac{9}{4}}+O_\delta(T^{\frac{727}{324}+\delta}), $$ as
$T\rightarrow \infty$ where $c_5$ is a specific negative constant
.

From the work of Heath-Brown \cite{HB} in 1992, we know that the
normalized remainder term $t^{-1/4} R(t)$ has an asymptotic
distribution function in the sense that for any interval $I$
$$ T^{-1} \hbox{mes}\left\{t\in[1, T], t^{-\frac{1}{4}} R(t)\in I\right\}
\longrightarrow \int _I f(\alpha) d \alpha $$ as $T\rightarrow
\infty$. He showed that the density function and its derivatives
decay on the real line faster than exponentially. His methods also
show the convergence of the moments up to order nine even though           
they are not strong enough to provide the rate of convergence.

As the first, natural, non-commutative generalization of
${\mathbb{T}}^2$ consider 3-dimensional Heisenberg manifolds
$(\Gamma\backslash H_1, g)$. These manifolds have completely
integrable geodesic flows \cite{Bu}. Petridis and Toth \cite{PT}
proved that for certain {\lq arithmetic\rq} Heisenberg metrics
$R(t)=O_\delta(t^{5/6+\delta}).$
Later in \cite{PT2} the exponent was improved to
$R(t)=O_\delta(t^{34/41+\delta})$ and the result extended to all
left-invariant Heisenberg metrics. It was conjectured in \cite{PT}
that for $(\Gamma\backslash H_1, g)$,
\begin{equation}\label{conjecture}
R(t)=O_{\delta}(t^{\frac{3}{4}+\delta}).
\end{equation}
Moreover, as evidence for this conjecture, Petridis and Toth
\cite{PT} proved the following $L^2$-result for $(\Gamma\backslash
H_1, g)$ with the arithmetic metric by averaging locally over the
moduli space of left-invariant metrics
\begin{displaymath}
\int_{I^3}|N(t;
u)-\frac{1}{6{\pi}^2}\hbox{vol}(M(u))t^{\frac{3}{2}}|^2 du\leq
C_{\delta} t^{\frac{3}{2}+\delta},
\end{displaymath}
where $I=[1-\epsilon, 1+\epsilon]$. They also proved that for
sufficiently large $T$,
\begin{displaymath}
\frac{1}{T}\int_{T}^{2T}|N(t)-\frac{1}{6{\pi}^2}\hbox{vol}(M)t^{\frac{3}{2}}|dt\gg
T^{\frac{3}{4}}.
\end{displaymath}
It has been noted that the conjecture (\ref{conjecture}) follows
from the standard conjectures on the growth of exponential sums,
see \cite{PT2}.

In higher dimensions, i.e. $(\Gamma\backslash H_n, g)$ where
$n>1$, in joint work with Petridis \cite{KP} we proved that for
generic irrational metrics
\begin{displaymath}
R(t)=O_{\delta}(t^{n-\frac{1}{4}+\delta}).
\end{displaymath}
Moreover, we demonstrated that this bound is sharp.

As evidence for (\ref{conjecture}), we proved with Toth \cite{KT}
the $L_2$-result

\begin{eqnarray}
\int_1^T {R(t)}^2 dt =d_2\,
T^{\frac{5}{2}}+O_{\delta}(T^{\frac{9}{4}+\delta})\label{R},
\end{eqnarray}
where $d_2$ is an explicitly evaluated positive constant.

The main purpose of this paper is to prove that $\int_1^T {R(t)}^3
dt$ similarly has meaningful asymptotics for $(\Gamma\backslash
H_n, g)$.
\medskip





\begin{theo}\label{thm} For $(2n+1)$-dimensional
Heisenberg manifold with the metric $g=\begin{pmatrix}
  I_{2n\times 2n} & 0\\
  0 & 2\pi
\end{pmatrix}$, where $I_{2n\times 2n}$ is the identity
 matrix, there exists a positive constant $d_3$ such that
\begin{eqnarray}
\int_1^T {R(t)}^3 dt =d_3\,
T^{3n+\frac{1}{4}}+O_\delta(T^{3n+\frac{3}{14}+\delta}).
\end{eqnarray}

\end{theo}

\begin{rem} Without loss of generality, we prove
Theorem \ref{thm} only for 3-dimensional Heisenberg manifolds. In
general dimensions the proof follows in an identical manner.  See \cite{KT} for the exponential representation of the remainder
term of Weyl's law on higher-dimensional Heisenberg manifolds.
\end{rem}

\begin{rem} Theorem \ref{thm} also holds for rational
$(2n+1)$-dimensional Heisenberg manifolds (for the definition of
rationality refer to \cite{KP}). However in this case, our methods
to prove the positivity of the constant do not apply any more. In
the case of irrational Heisenberg manifolds we do not currently
know how to prove the result.
\end{rem}

\begin{rem}
Based on the the method of the proof, which implies a large
truncation index in the summation defining the mollified remainder
term, we are not able to modify this method to prove a 4th moment
result.
\end{rem}

\section{Background on Heisenberg manifolds}

We review here some of the basic properties of Heisenberg manifolds. The
reader should consult \cite{GW}, \cite{St} or \cite{Fo} for
further details.

\subsection{Basic definitions and notation} For any two real numbers
$x$ and $y$ let
\begin{displaymath}
\gamma(x, y, t)=\begin{pmatrix}
  1 & x & t \\
  0 & 1 & y \\
  0 & 0 & 1
\end{pmatrix}  \; , \;
 X(x, y, t)=\begin{pmatrix}
  0 & x & t \\
  0 & 0 & y \\
  0 & 0 & 0
\end{pmatrix}.
\end{displaymath}
The real $3$-dimensional Heisenberg group $H_1$ is the Lie
subgroup of $Gl_3(\mathbb{R})$ consisting of all matrices of the
form $\gamma(x, y, t)$:

\begin{displaymath}
H_1=\{\gamma(x, y, t): x,y\in {\mathbb{R}} , t\in \mathbb{R} \}.
\end{displaymath}
The Lie algebra of $H_1$ is:

\begin{displaymath}
{\frak{h}}_1=\{X(x, y, t): x,y\in {\mathbb{R}} , t\in
\mathbb{R}\}.
\end{displaymath}
The matrix exponential maps ${\frak{h}}_1$ diffeomorphically onto
$H_1$ and is given by the formula

\begin{displaymath}
  \begin{cases}
    \exp: {\frak{h}}_1 \mapsto H_1 , \\
    X(x, y, t)\mapsto \gamma(x, y, t+\frac{1}{2}x.y).
  \end{cases}
\end{displaymath}
The product operation in $H_1$ and Lie bracket in ${\frak{h}}_1$
are given by

\begin{displaymath}
\gamma(x, y, t).\gamma(x', y', t')= \gamma(x+x', y+y', t+t'+x.y'),
\end{displaymath}
\begin{displaymath}
[X(x, y, t), X(x', y', t')]= X(0, 0, x.y'-x'.y).
\end{displaymath}
The algebra ${\frak{z}}_1=\{X(0, 0, t), t\in \mathbb{R}\}$ is both
the center and the derived subalgebra of ${\frak{h}}_1$. It is
also convenient to identify the subspace $\{X(x, y, 0), x,y\in
{\mathbb{R}}\}$ of ${\frak{h}}_1$ with ${\mathbb{R}}^{2}$ and so,
${\frak{h}}_1={\mathbb{R}}^{2}\oplus {\frak{z}}_1$.

The standard basis of ${\frak{h}}_1$ is the set $\delta=\{X_1,
Y_1, Z\}$, where the first $2$ elements are the standard basis of
${\mathbb{R}}^{2}$ and $Z=X(0, 0, 1)$. The only nonzero bracket
among the elements of $\delta$ is given by $[X_1, Y_1]=Z$.

\medskip

\begin{defn} A Riemannian Heisenberg manifold is a pair $(\Gamma\backslash H_1, g)$
where $\Gamma$ is a uniform discrete subgroup of $H_1$ ({\lq
uniform\rq} means that the quotient $\Gamma\backslash H_1$ is
compact), and $g$ is a Riemannian metric on $\Gamma\backslash H_1$
whose lift to $H_1$ is left $H_1$-invariant.
\end{defn}

\subsection{Classification of the uniform discrete subgroups of $H_1$} For
every positive integer $r$, define
\begin{displaymath}
\Gamma_r=\{\gamma(x, y, t): x\in r {\mathbb{Z}}, y\in
{\mathbb{Z}}, t\in \mathbb{Z}\}.
\end{displaymath}
It is clear that $\Gamma_r$ is a uniform discrete subgroup of
$H_1$.

\begin{theo} ( \cite{GW}, Theorem 2.4) The subgroups $\Gamma_r$ classify
the uniform discrete subgroups of $H_1$ up to
automorphisms. In other words, for every uniform discrete subgroup
of $H_1$ there exists a unique $r\in {\mathbb{Z}}_+ $ and an
automorphism of $H_1$ which maps $\Gamma$ to $\Gamma_r$. Also, if
two subgroups $\Gamma_r$ and $\Gamma_s$ are isomorphic then $r$
and $s$ are equal.
\end{theo}

\medskip

\begin{cor} ( \cite{GW}, Corollary 2.5) Given any Riemannian Heisenberg manifold
$M=(\Gamma\backslash H_1, g)$, there exists a unique positive
integer $r$ and a left-invariant metric $\tilde{g}$ on $H_1$ such
that $M$ is isometric to $(\Gamma_r\backslash H_1, \tilde{g})$.
\end{cor}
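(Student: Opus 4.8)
The plan is to deduce the corollary directly from the classification theorem (\cite{GW}, Theorem~2.4) by transporting the metric along the automorphism it produces. First I would pass to the universal cover: since $g$ lifts to a left $H_1$-invariant metric $\hat g$ on $H_1$, the projection $\pi_\Gamma\colon (H_1,\hat g)\to(\Gamma\backslash H_1,g)$ is a Riemannian covering whose deck group is $\Gamma$ acting by left translations $L_\gamma$, $\gamma\in\Gamma$ (these are isometries precisely because $\hat g$ is left-invariant). Thus $(M,g)$ is recovered as the quotient of $(H_1,\hat g)$ by $\Gamma$.

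By the classification theorem there is a unique positive integer $r$ and an automorphism $\phi$ of $H_1$ with $\phi(\Gamma)=\Gamma_r$. I would then define the transported metric $\tilde g=(\phi^{-1})^{*}\hat g$, so that $\phi^{*}\tilde g=\hat g$ and hence $\phi\colon (H_1,\hat g)\to(H_1,\tilde g)$ is by construction an isometry. The one genuine point to check is that $\tilde g$ is again left-invariant, and this is where the automorphism structure (as opposed to a mere diffeomorphism) is essential: because $\phi$ is a group homomorphism one has the intertwining relation $\phi\circ L_b=L_{\phi(b)}\circ\phi$ for every $b\in H_1$, equivalently $\phi^{-1}\circ L_{\phi(b)}=L_b\circ\phi^{-1}$. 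Writing an arbitrary element as $a=\phi(b)$ and using the left-invariance $L_b^{*}\hat g=\hat g$, I compute
\begin{displaymath}
L_a^{*}\tilde g=L_a^{*}(\phi^{-1})^{*}\hat g=(\phi^{-1}\circ L_a)^{*}\hat g=(L_b\circ\phi^{-1})^{*}\hat g=(\phi^{-1})^{*}L_b^{*}\hat g=(\phi^{-1})^{*}\hat g=\tilde g,
\end{displaymath}
so every left translation is a $\tilde g$-isometry and $\tilde g$ is left-invariant.

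Next I would descend $\phi$ to the quotients. The same homomorphism property gives $\phi\circ L_\gamma\circ\phi^{-1}=L_{\phi(\gamma)}$ with $\phi(\gamma)\in\Gamma_r$, so $\phi$ conjugates the deck group $\Gamma$ of $\pi_\Gamma$ onto the deck group $\Gamma_r$ of $\pi_{\Gamma_r}\colon(H_1,\tilde g)\to(\Gamma_r\backslash H_1,\tilde g)$. Consequently the isometry $\phi$ descends to a well-defined diffeomorphism $\bar\phi\colon\Gamma\backslash H_1\to\Gamma_r\backslash H_1$ which, being covered by a global isometry compatible with the two covering projections, is itself an isometry. This exhibits $M$ as isometric to $(\Gamma_r\backslash H_1,\tilde g)$.

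Finally, the uniqueness of $r$ is inherited verbatim from the classification theorem, since any two values of $r$ realizing $M$ would force $\Gamma_r$ and $\Gamma_s$ to be isomorphic and hence $r=s$; note that $\tilde g$ itself is \emph{not} asserted to be unique, as different admissible automorphisms $\phi$ (differing by the normalizer of $\Gamma_r$) generally yield different, though mutually isometric, left-invariant metrics. The only step requiring genuine care is the left-invariance verification displayed above; everything else is the formal descent of an isometry through commuting covering maps, so I expect no further obstacle.
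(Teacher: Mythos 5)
Your proposal is correct. Note, however, that the paper itself contains no proof of this corollary: it is quoted verbatim from Gordon--Wilson (\cite{GW}, Corollary 2.5) as background material, immediately after their classification theorem (\cite{GW}, Theorem 2.4), so there is no internal argument to compare against. Your derivation --- lift $g$ to a left-invariant metric $\hat g$ on $H_1$, apply Theorem 2.4 to obtain the automorphism $\phi$ with $\phi(\Gamma)=\Gamma_r$, transport the metric via $\tilde g=(\phi^{-1})^{*}\hat g$, verify left-invariance through the intertwining relation $\phi\circ L_b=L_{\phi(b)}\circ\phi$, and descend $\phi$ to an isometry of the quotients --- is precisely the natural deduction of the corollary from the theorem, and is essentially the argument of the cited source. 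Your attention to the two genuinely non-formal points (that left-invariance of $\tilde g$ uses the homomorphism property of $\phi$, not mere diffeomorphism invariance, and that uniqueness applies to $r$ but not to $\tilde g$) is exactly right, and the uniqueness argument via fundamental groups of the quotients together with the last clause of Theorem 2.4 is sound since $H_1$ is simply connected.
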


\medskip

Since every left-invariant metric $g$ on $H_1$ is uniquely
determined by an inner product on ${\frak{h}}_1$, the
left-invariant metrics can be identified with their matrices
relative to the standard basis of ${\frak{h}}_1$. For any $g$ we
can choose an inner automorphism $\varphi$ of $H_1$ such that
${\mathbb{R}}^{2}$ is orthogonal to ${{\frak{z}}}_1$ with respect
to ${\varphi}^*g$. Therefore, $(\Gamma\backslash H_1, g)$ will be
isometric to $(\Gamma\backslash H_1, {\varphi}^*g)$ and we can
replace every left-invariant metric $g$ by ${\varphi}^*g$ and
always assume that the metric $g$ has the form $g=\begin{pmatrix}
  h & 0 \\
  0 & g_{3}
\end{pmatrix},$
where $h$ is a positive-definite $2\times 2$ matrix and $g_{3}$ is
a positive real number. The volume of the Heisenberg manifold is
given by the formula $\hbox{\hbox{vol}}( \Gamma_r\backslash H_1,
g)=r\sqrt{\hbox{det}(g)}$.

\subsection{The spectrum of Heisenberg manifolds.}
Let $M=( \Gamma\backslash H_1, g)$ be a Heisenberg manifold where
the metric $g$ is in the arithmetic form $g=\begin{pmatrix}
  I_2 & 0\\
  0 & 2\pi
\end{pmatrix}$ and $I_2$ is the two by two identity matrix.

Let $\Sigma$ be the spectrum of the Laplacian on $M=(
\Gamma\backslash H_1, g)$, where the eigenvalues are counted with
multiplicities. Then, $\Sigma ={\Sigma}_1\cup {\Sigma}_2$ ( see
\cite{GW} page 258) where,

\begin{displaymath}
{\Sigma}_1=\{\lambda(m, n)=4\pi^2(m^2+n^2);  (m, n)\in
{\mathbb{Z}}^{2} \},
\end{displaymath}
such that $\lambda(m, n)$ is counted once for each pair $(m, n)\in
{\mathbb{Z}}^{2}$ such that $\lambda=\lambda(m, n)$.

\noindent The second part of the spectrum, ${\Sigma}_2$, is the
set

\begin{displaymath}
{\Sigma}_2=\{\mu(c,k)=2\pi c(c+(2k+1)); c\in {\mathbb{Z}}_+,
k\in({\mathbb{Z}}_+\cup\{0\}) \},
\end{displaymath}
where every $ \mu(c,k) $ is counted with multiplicity $2c$.


\section{Estimates for regularized spectral counting function}

The idea of the proof of theorem ~(\ref{thm}) is to use the
exponential sum representation which we proved for the regularized spectral counting
function in \cite{KT} and apply a modified version of the
method used by Tsang \cite{Ts}.

In this section we give a short overview on some of the notation and
results proved in \cite{KT}. Let $N(t)$ to be the spectral counting
function defined by

\begin{eqnarray}
N(t)=N_T(t)+N_H(t),
\end{eqnarray}
where $N_T(t)$ is the spectral counting function of the torus,
defined by

\begin{displaymath}
N_T(t)=\#\{\lambda\in\Sigma_1 ; \lambda\leq t\},
\end{displaymath}
and $N_H(t)$ is defined by

\begin{displaymath}
N_H(t)=\# \{\lambda\in\Sigma_2 ; \; \lambda\leq t \}.
\end{displaymath}
The estimates for $N_T(t)$ are well-known. For example,
\begin{eqnarray}
N_T(t)=\frac{t}{4\pi}+O(t^{\frac{1}{2}}),
\end{eqnarray}
will suffice for our purposes. This bound was known to Gauss. To
evaluate $N_H(t)$, we write

\begin{eqnarray}
N_H(t)=\sum_{c(c+(2k+1))\leq t/2\pi}2c.\label{n}
\end{eqnarray}
Define $A_t=\{(x, y); x>0, y>0, x(x+2y+1)\leq t\}$. Then, we have

\begin{eqnarray}
N_H(2\pi t)=\sum_{(c,k)\in {\mathbb{Z}}^2}(2c)\chi_{A_t}(c,
k).\label{n-21}
\end{eqnarray}

To obtain an exponential-sum representation for the remainder
term we need to apply the Poisson summation formula to write the
remainder term, corresponding to type II eigenvalues, in a form
which can be estimated by the method of the stationary phase.

However, to justify the application of the Poisson summation
formula for $N_H(2\pi t)$, we need to regularize the
characteristic function $\chi_{A_t}$. Take $\rho$ to be a smooth
symmetric positive function on ${\mathbb{R}}^2$ with
$\int_{{\mathbb{R}}^2}\rho(x, y)dxdy=1$ and
$\hbox{supp}(\rho)\subseteq[-1, 1]^2$. Let $\rho_\epsilon(x,
y)=\epsilon^{-2}\rho({x}/{\epsilon}, {y}/{\epsilon})$,
where we make an explicit choice of $\epsilon>0$ later on.
Consider the mollified counting functions

\begin{eqnarray}
N^{\epsilon} _H(t):=\sum_{(c,k)\in
{\mathbb{Z}}^2}(2c)\chi_{A_t}(c, k) \ast \rho_\epsilon(c,
k).\label{n-eps1}
\end{eqnarray}

\begin{lem} \label{lem1}Let $T$ be an arbitrarily large number and
put $\epsilon=T^{-\gamma}$ for an arbitrary fixed $\gamma\in (0,1]$.
Then, for $1<t<T$ and a constant $c_\gamma>2$ which depends only on
$\gamma$, we have

\begin{displaymath}
N^{\epsilon}_H(t-c_\gamma T^{1-\gamma})\leq N_H(2\pi t)\leq
N^{\epsilon}_H (t+c_\gamma T^{1-\gamma}).
\end{displaymath}

\end{lem}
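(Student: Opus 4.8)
The plan is to reduce the claimed sandwich to a pair of pointwise inequalities on the lattice and then to control everything through the support of the mollifier together with a variation estimate for the quadratic form defining $A_t$. Both $N_H(2\pi t)$ and $N^{\epsilon}_H(r)$ are sums over the same lattice $\mathbb{Z}^2$ with the same weights $2c$, the only difference being $\chi_{A_t}(c,k)$ versus $(\chi_{A_r}\ast\rho_\epsilon)(c,k)$. Since $A_r\subseteq\{x>0\}$ and $\mathrm{supp}(\rho_\epsilon)\subseteq[-\epsilon,\epsilon]^2$ with $\epsilon=T^{-\gamma}<1$, the only lattice points at which either term is nonzero have $c\geq 1$, where the weight $2c$ is positive. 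Hence, writing $s=c_\gamma T^{1-\gamma}$, it suffices to prove the pointwise inequalities
\[
(\chi_{A_{t-s}}\ast\rho_\epsilon)(c,k)\ \leq\ \chi_{A_t}(c,k)\ \leq\ (\chi_{A_{t+s}}\ast\rho_\epsilon)(c,k)
\]
for every such $(c,k)$; summing against $2c\geq 0$ then yields the lemma.

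Next I would exploit that $(\chi_{A_r}\ast\rho_\epsilon)(c,k)$ is the $\rho_\epsilon$-weighted average of $\chi_{A_r}$ over the box $B=(c,k)+[-\epsilon,\epsilon]^2$, a number in $[0,1]$ equal to $1$ exactly when $B\subseteq A_r$ and to $0$ exactly when $B\cap A_r=\emptyset$. For the upper bound, if $\chi_{A_t}(c,k)=1$ I must show $B\subseteq A_{t+s}$; for the lower bound, arguing by contrapositive, if $\chi_{A_t}(c,k)=0$ I must show $B\cap A_{t-s}=\emptyset$. Both reduce to the single two-sided estimate on the defining function $F(x,y)=x(x+2y+1)$: for all $|u|,|v|\leq\epsilon$,
\[
|F(c-u,k-v)-F(c,k)|\ \leq\ s .
\]
Indeed, if $F(c,k)\leq t$ this forces $F\leq t+s$ throughout $B$ (so $B\subseteq A_{t+s}$), while if $F(c,k)>t$ it forces $F>t-s$ throughout $B$ (so $B\cap A_{t-s}=\emptyset$); the first-quadrant constraint in $x$ is preserved since $c-u\geq 1-\epsilon>0$.

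The variation estimate itself I would establish by direct expansion,
\[
F(c-u,k-v)-F(c,k)=-2cu-2cv-2ku-u+u^2+2uv,
\]
whence $|F(c-u,k-v)-F(c,k)|\leq 4c\epsilon+2k\epsilon+\epsilon+3\epsilon^2$. For the lattice points where the estimate is needed one has $F(c,k)=O(T)$, so $c=O(\sqrt T)$ makes $4c\epsilon=O(T^{1/2-\gamma})$ lower order, while the delicate term $2k\epsilon$ — in which $k$ may itself be as large as $\sim t$ when $c=1$ — is tamed by $2ck\leq F(c,k)\leq t$, giving $2k\epsilon\leq t\epsilon\leq T^{1-\gamma}$. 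Collecting terms bounds the variation by $c_\gamma T^{1-\gamma}=s$ for a suitable constant $c_\gamma>2$.

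The main obstacle is precisely this uniform bound on $2k\epsilon$: it is where the quadratic rather than linear nature of $F$ and the hyperbola-like shape of $\partial A_t$ enter, and it is what both forces the exponent $T^{1-\gamma}$ and dictates the condition $c_\gamma>2$. The remaining points are cosmetic: one checks the behaviour along the coordinate axes (the bound $c\geq 1$ handles the $x$-axis, and the $k=0$ terms along the $y$-axis are dealt with through the convention fixing $A_t$ on its lower edge, as in \cite{KT}), and one absorbs the lower-order contributions $O(T^{1/2-\gamma})+O(\epsilon^2)$ into $c_\gamma$.
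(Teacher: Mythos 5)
Your strategy is sound and, at its quantitative heart, the same as the paper's: everything comes down to the fact that the mollifier can only move mass a distance $\epsilon$, while the gap between the level sets $F=t$ and $F=t\pm s$ of $F(x,y)=x(x+2y+1)$ is narrowest at the tip of the hyperbolic region, where the steepness is of order $t$; this is what forces $s$ to be of order $t\epsilon\sim T^{1-\gamma}$ and fixes the constant. The paper encodes this as the distance from the extreme lattice point $(1,[t-1])$ to the shifted hyperbola, via the decomposition into points near/far from $\partial A_{t\pm s}$; you encode it as the bound $2k\epsilon\le t\epsilon$ on the variation of $F$ over the $\epsilon$-box. Your upper-bound half is complete: when $\chi_{A_t}(c,k)=1$ you genuinely have $F(c,k)\le t$, hence $c\le\sqrt{T}$ and $2ck\le t$, and the variation bound follows. (The $k=0$/axis issue you defer to the convention of \cite{KT} is also present, silently, in the paper's proof, which simply restricts attention to ${\mathbb{Z}}_+^2$ and the hyperbola; I do not count it against you.)

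There is, however, a genuine gap in the lower-bound half. Arguing by contrapositive, you must show that $\chi_{A_t}(c,k)=0$ with $c,k\ge 1$ implies $B\cap A_{t-s}=\emptyset$, i.e.\ you need your estimate at \emph{every} lattice point with $F(c,k)>t$, out to infinity. But your justification of the variation bound --- ``$2k\epsilon$ \dots is tamed by $2ck\le F(c,k)\le t$'' --- invokes exactly the inequality $F(c,k)\le t$ that is negated in this branch, so it is circular there; and the claimed uniform estimate $|F(c-u,k-v)-F(c,k)|\le s$ is actually false at far-out points (take $c=k\sim T^2$: the term $2k\epsilon\sim T^{2-\gamma}$ dwarfs $s=c_\gamma T^{1-\gamma}$). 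The desired conclusion still holds at such points, but for a different, multiplicative reason which you never supply: since $4c+2k+1\le 6F(c,k)$ for $c\ge 1$, the variation is at most $6\epsilon F(c,k)+3\epsilon^2$, whence $F\ge F(c,k)(1-6\epsilon)-3\epsilon^2>t(1-6\epsilon)-3\epsilon^2>t-s$ throughout $B$ once $c_\gamma$ is chosen large enough. Alternatively, run the implication forward instead of by contrapositive: if the convolution at $(c,k)$ is nonzero, the box contains a point $(x_0,y_0)\in A_{t-s}$, and anchoring your variation estimate at $(x_0,y_0)$ --- where $F\le t-s$ \emph{does} hold, so $x_0\le\sqrt{T}$ and $2x_0y_0\le T$ --- gives $F(c,k)\le t$ and hence $\chi_{A_t}(c,k)=1$. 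Either one-line repair closes the gap. Note that the paper's decomposition by distance to the boundary curve avoids this issue automatically, because nearness to the hyperbola already forces $F(c,k)=O(T)$, whereas your inside/outside dichotomy does not.
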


\begin{proof} We prove the first inequality in \ref{lem1}.
The other inequality follows in the same way. Given $A_t=\{(x, y);
x>0, y>0, x(x+y)\leq t\}$, let $\partial A_t$ to be the hyperbola
$x(x+y)=t$. If a point $X=(x, y)\in{{\mathbb{Z}}_+}^2$ lies at a
distance greater than $\sqrt{2}\epsilon$ from $\partial A_t$, then
$\chi_{A_t}\ast \rho_\epsilon(X)=\chi_{A_t}(X)$.

Therefore, by taking $\Omega_1=\{(c,k)\in {\mathbb{Z}}^2;
\hbox{dist}((c,k),
\partial A_{t+K\epsilon})>\sqrt{2}\epsilon\}$, we have,

\begin{eqnarray*}
N^{\epsilon} _H(t+K\epsilon)&=&\sum_{(c,k)\in
{\mathbb{Z}}^2}(2c)(\chi_{A_{t+K\epsilon}}\ast \rho_\epsilon)(c,
k)\\&=&\sum_{ (c,k)\in\Omega_1}(2c)\chi_{A_{t+K\epsilon}}(c,
k)+\sum_{(c,k)\in{\mathbb{Z}}^2\setminus\Omega_1}(2c)(\chi_{A_{t+K\epsilon}}\ast
\rho_\epsilon)(c, k).
\end{eqnarray*}
On the other hand,
\begin{eqnarray*}
N _H(2\pi t)=\sum_{(c,k)\in {\mathbb{Z}}^2}(2c)\chi_{A_t}(c, k).
\end{eqnarray*}
So, to get $N^{\epsilon} _H(t+K\epsilon)\geq N _H(2\pi t)$, it
suffices to choose $\epsilon$ and $K$ so that ${\mathbb{Z}}^2 \cap
A_t \subseteq \Omega_1$. Since the closest point of
${\mathbb{Z}}^2 \cap A_t$ to $\partial A_{t+K\epsilon}$ is $(1,
[t-1])$, it suffices to require that

\begin{eqnarray}
\hbox{dist}((1,t), (\frac{-t+\sqrt{t^2+4t+4K\epsilon}}{2},
t))>\sqrt{2}\epsilon. \label{lemma}
\end{eqnarray}

Equation ~(\ref{lemma}) is equivalent to $4K\epsilon>4{\epsilon}^2
+4+4\epsilon t+8\epsilon$. So, it is enough to choose $K= 2T$ and
$\epsilon= T^{-\gamma}$. The inequality $N^{\epsilon}_A(t-c_\gamma
T^{1-\gamma})\leq N _H(2\pi t)$ can be proved in the same way.
\end{proof}

\begin{rem}\label{rem2}
Lemma \ref{lem1} will help us to convert our average results on
$N^{\epsilon} _H(t)$ back to $N_H(t)$. However, for this
conversion we need $\gamma>{3}/{4}$.\end{rem}

\begin{rem}
Based on the condition $\gamma>{3}/{4}$, which implies a large
truncation index in the summation defining $R_H^\epsilon(t)$,
we are not able to modify this method to prove a 4th moment result.
\end{rem}
\medskip

\begin{rem}\label{rem3}\hspace{.7in}

\begin{enumerate}\item Henceforth, we always assume $\epsilon=T^{-\gamma}$ for a
fixed large $T$, fixed $\gamma\in(0,1]$ and $t\in[1, T]$. Also we
assume that $\delta$ is an arbitrary small positive number
independent of $T$. \item By the notation $f(x)\ll g(x)$, we mean
that there exists a positive constant $C$ such that $|f(x)|\leq C
|g(x)|$ for every $x$.\end{enumerate}
\end{rem}
\medskip

\begin{prop}\label{prop1}(\cite{KT})The following asymptotic
expansion holds for $N^{\epsilon} _H$:

\begin{eqnarray}
N^{\epsilon} _H
(t)=\frac{2}{3}t^{\frac{3}{2}}-\frac{1}{2}t+R^{\epsilon}_H(t)
+O(t^{\frac{1}{2}+\delta}),\label{N-eps1}
\end{eqnarray}
where,
\begin{eqnarray}
R^{\epsilon}_H(t)&=&\frac{t^{\frac{3}{4}}}{\pi}\sum_{\stackrel{0<\nu<\mu,}{
\mu\equiv\nu \imod{2}}} (-1)^\nu
\cos(2\pi\sqrt{t}\sqrt{\mu\nu}-\frac{\pi}{4})
\mu^{-\frac{5}{4}}\nu^{-\frac{1}{4}}
\widehat{\rho}_\epsilon(\frac{\mu+\nu}{2},\nu)
\nonumber\\
&+&\frac{t^{\frac{3}{4}}}{2\pi}\sum_{0<\nu}(-1)^\nu
\cos(2\pi\sqrt{t}\nu-\frac{\pi}{4})\nu^{-\frac{3}{2}}
\widehat{\rho}_\epsilon(\nu, \nu)\label{repeat}.
\end{eqnarray}
\end{prop}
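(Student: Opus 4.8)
The plan is to apply the Poisson summation formula to the smoothed weighted sum (\ref{n-eps1}) and then to organize the resulting dual sum according to the size of the frequency $(m,n)$. Write $G_t(x,y)=2x\,\chi_{A_t}(x,y)$. Since the weight is the polynomial $2x$, accounting for it amounts to differentiating the transform in the first frequency variable, and Poisson summation yields
\[
N^\epsilon_H(t)=\sum_{(m,n)\in\Z^2}\widehat{G_t}(m,n)\,\widehat{\rho}_\epsilon(m,n)+E,
\]
where $E$ collects the terms in which this derivative lands on $\widehat\rho_\epsilon$ and hence carries an extra factor $\epsilon$; these contribute only $E=O(t^{\half+\delta})$. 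The identity is legitimized by the rapid decay of $\widehat\rho_\epsilon(\xi)=\widehat\rho(\epsilon\xi)$, which moreover confines the effective range of summation to $|(m,n)|\ll\epsilon^{-1}$ and thereby makes the eventual $\mu,\nu$-sums finite.

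The frequency $(m,n)=(0,0)$ produces the polynomial part of (\ref{N-eps1}). Here $\widehat{G_t}(0,0)=\int\!\!\int_{A_t}2x\,dx\,dy$, and integrating first in $y$ over the region $x(x+2y+1)\le t$ and then in $x$ up to $x_{\max}=\tfrac{-1+\sqrt{1+4t}}{2}$ gives $\tfrac23 t^{3/2}-\half t+O(t^{1/2})$, which supplies the first two terms and is consistent with the stated error.

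For $(m,n)\ne(0,0)$ the plan is to evaluate $\widehat{G_t}(m,n)$ by stationary phase, and the crucial observation is that the defining quadratic factors. Setting $U=x$ and $V=x+2y$ one has $x(x+2y+1)=U(V+1)$ and $dx\,dy=\half\,dU\,dV$, so in these null coordinates $A_t$ becomes $\{U>0,\ V>U,\ U(V+1)\le t\}$ while the phase $mx+ny$ becomes $\tfrac12(\mu U+\nu V)$ with $\mu=2m-n$ and $\nu=n$. Carrying out the $V$-integration splits $\widehat{G_t}$ into a contribution from the hyperbola $V=t/U-1$ and one from the line $V=U$ (the $k=0$ axis). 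The hyperbola term carries the phase $-\pi(\mu U+\nu t/U)$, which has a single stationary point at $U_0=\sqrt{\nu t/\mu}$ with critical value $-2\pi\sqrt{\mu\nu t}$ and $\Phi''(U_0)<0$; inserting $U_0$, the weight $2U_0\sim\sqrt t$, and $|\Phi''(U_0)|^{-1/2}$ into the stationary-phase formula produces the amplitude $t^{3/4}\mu^{-5/4}\nu^{-1/4}$, the shift $-\pi/4$, and (from the displacement $V\mapsto V+1$) the sign $(-1)^\nu$. Re-indexing by $(\mu,\nu)$ sends $\Z^2$ bijectively onto $\{\mu\equiv\nu\pmod{2}\}$ and the admissible cone onto $0<\nu<\mu$; this is the source of the double sum in (\ref{repeat}). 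The single sum is the corner contribution: when $\mu=\nu$ the stationary point $U_0\sim\sqrt t$ coalesces with the vertex where the hyperbola meets $V=U$, the two boundary pieces merge, and what remains is a half-weight term with the diagonal amplitude $\mu^{-5/4}\nu^{-1/4}\big|_{\mu=\nu}=\nu^{-3/2}$ and argument $\widehat\rho_\epsilon(\nu,\nu)$, which accounts for the prefactor $\tfrac{1}{2\pi}$.

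The hard part will be making the stationary-phase step uniform in the frequency. Because $\bd A_t$ is non-compact, one must show that the portions near its asymptotes and near the $x=0$ axis, together with the non-stationary part of the $V=U$ contribution, all fall into the error, and one must handle the transition near the corner $\mu=\nu$ where the ordinary two-term expansion degenerates. Summing the stationary-phase remainders against the rapidly decaying $\widehat\rho_\epsilon$ over $|(m,n)|\ll\epsilon^{-1}$, and combining them with the $O(t^{1/2})$ from the zero mode and the error $E$ from the weight, should consolidate into the bound $O(t^{1/2+\delta})$ recorded in (\ref{N-eps1}).
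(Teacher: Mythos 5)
Your proposal follows essentially the same route as the source: the paper itself gives no proof of this proposition (it is imported from \cite{KT}, whose argument is exactly the Poisson-summation-plus-stationary-phase analysis of the mollified sum that the surrounding text describes), and your sketch reproduces that derivation correctly --- the zero-frequency computation, the substitution $\mu=2m-n$, $\nu=n$ explaining the parity condition and $\widehat{\rho}_\epsilon(\tfrac{\mu+\nu}{2},\nu)$, the stationary point $U_0=\sqrt{\nu t/\mu}$ yielding the amplitude $t^{3/4}\mu^{-5/4}\nu^{-1/4}$, the sign $(-1)^\nu$ from the shift $V+1$, and the half-weight diagonal term all check against (\ref{repeat}). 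The uniform error analysis you defer is where the real work of \cite{KT} lies, but the strategy and the computed main terms are right.
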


\vspace{.1in}

\section{Proof of theorem \ref{thm}}
Given the formula for the regularized counting function in
Proposition \ref{prop1}, we prove Theorem \ref{thm} in three
steps: First, we truncate the exponential sum representing
$R_H^{\epsilon}$ at a suitable term. Then, we apply a modified
version of Tsang's method to this truncated sum.
 Finally, using Lemma
\ref{lem1}, we eliminate the mollifier $\rho_\epsilon$ and prove Theorem
\ref{thm}.

\begin{lem}\label{lem} Let $F^{\epsilon}_H(t)$ be the first summation on the
right-hand side of ~(\ref{repeat}), then we have
\begin{eqnarray}
F^{\epsilon}_H(t)=\sum_{\stackrel{0<\nu<\mu ;
\mu\nu<T^{\alpha}}{\mu\equiv\nu \imod{2}}}(-1)^\nu t^{\frac{3}{4}}
\cos(2\pi\sqrt{t}\sqrt{\mu\nu}-\frac{\pi}{4})
\mu^{-\frac{5}{4}}\nu^{-\frac{1}{4}}
\widehat{\rho}_\epsilon(\frac{\mu+\nu}{2},\nu)
+O(T^{1/2}),\nonumber
\end{eqnarray}
where $\alpha$ is an arbitrary positive number lying in $(2\gamma,
2)$.
\end{lem}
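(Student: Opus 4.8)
The plan is to estimate the tail that is discarded in passing from $F^{\epsilon}_H(t)$ to the truncated sum, namely
\[
E(t) := \frac{t^{3/4}}{\pi}\sum_{\substack{0<\nu<\mu,\ \mu\nu\geq T^{\alpha}\\ \mu\equiv\nu \imod{2}}} (-1)^\nu \cos\Bigl(2\pi\sqrt{t}\sqrt{\mu\nu}-\frac{\pi}{4}\Bigr)\, \mu^{-5/4}\nu^{-1/4}\, \widehat{\rho}_\epsilon\Bigl(\frac{\mu+\nu}{2},\nu\Bigr),
\]
and to show $E(t)=O(T^{1/2})$ uniformly for $t\in[1,T]$ (constants being irrelevant for this purpose). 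The only structural input I need is that $\rho$ is smooth and compactly supported, so its Fourier transform $\widehat{\rho}$ is Schwartz; since $\widehat{\rho}_\epsilon(\xi,\eta)=\widehat{\rho}(\epsilon\xi,\epsilon\eta)$, this gives, for every $N$,
\[
\Bigl|\widehat{\rho}_\epsilon\Bigl(\tfrac{\mu+\nu}{2},\nu\Bigr)\Bigr| \ll_N \bigl(1+\epsilon(\mu+\nu)\bigr)^{-N} \ll_N (1+\epsilon\mu)^{-N},
\]
using $\nu<\mu$ in the last step. I would bound the cosine and the sign trivially by $1$.

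The decisive observation is that the truncation condition forces $\mu$ to be large over the entire tail. Indeed, $\nu<\mu$ together with $\mu\nu\geq T^{\alpha}$ gives $\mu^{2}>T^{\alpha}$, hence $\mu>T^{\alpha/2}$. Since $\epsilon=T^{-\gamma}$ and $\alpha>2\gamma$, this yields $\epsilon\mu>T^{\alpha/2-\gamma}$ with a strictly positive exponent; in other words, the argument of $\widehat{\rho}_\epsilon$ is already pushed into the regime of rapid decay everywhere in the tail. Consequently $(1+\epsilon\mu)^{-N}$ behaves like $(\epsilon\mu)^{-N}$ there, and it is the faster-than-polynomial decay of the Schwartz function that will overwhelm the polynomial weights and the factor $t^{3/4}\leq T^{3/4}$.

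It then remains to carry out the double sum. Summing first over $\nu$ via $\sum_{\nu<\mu}\nu^{-1/4}\ll\mu^{3/4}$ and then over $\mu>T^{\alpha/2}$ gives
\[
|E(t)| \ll_N T^{3/4}\,\epsilon^{-N}\!\!\sum_{\mu>T^{\alpha/2}}\mu^{-5/4+3/4-N} \ll_N T^{3/4}\,T^{\gamma N}\,T^{(\alpha/2)(1/2-N)} = T^{\,3/4+\alpha/4+N(\gamma-\alpha/2)}.
\]
Because $\alpha>2\gamma$, the coefficient $\gamma-\alpha/2$ of $N$ is negative, so choosing $N$ large enough drives the exponent below $1/2$ (in fact below any prescribed value), which yields $E(t)=O(T^{1/2})$ and proves the lemma. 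I do not expect a serious obstacle here: the whole argument is a rapid-decay tail estimate, and the one point that genuinely needs care is the lower bound $\mu>T^{\alpha/2}$, since it is precisely the interaction of the cutoff $\mu\nu\geq T^{\alpha}$ with the hypothesis $\alpha>2\gamma$ that guarantees the mollifier's transform is already in its decaying range. The upper constraint $\alpha<2$ plays no role in this lemma; it is needed only later, to keep the retained sum short enough for the stationary-phase step adapted from Tsang.
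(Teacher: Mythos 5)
Your argument is correct and is essentially the paper's own: both proofs rest on the Schwartz decay of $\widehat{\rho}_\epsilon$ together with the observation that $\mu\nu\geq T^{\alpha}$ with $\alpha>2\gamma$ forces the argument of the mollifier's transform into its rapidly decaying range, so that a large enough power of decay overwhelms the polynomial weights. The only (immaterial) difference is bookkeeping: the paper groups the tail by $k=\mu\nu$ and uses a divisor bound $\sum_{\mu\mid k,\ \mu>\sqrt{k}}\mu^{-1}\ll k^{-1/2+\delta}$, whereas you sum directly over $\nu<\mu$ and exploit $\mu>T^{\alpha/2}$, which is slightly cleaner and reaches the same conclusion.
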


\begin{proof}Since $\widehat{\rho}$ is a Schwartz function, for any
positive integer $m$ we have
\begin{eqnarray}
\widehat{\rho}_\epsilon(\frac{\mu+\nu}{2},\nu)<<\frac{1}{{(\epsilon^2\mu\nu)}^m},
\end{eqnarray}for $\mu>\nu>0$.
Applying $\epsilon=T^{-\gamma}$ and letting $\alpha>0$ we then have
\begin{equation*}
\sum_{0<\nu<\mu ; \mu\nu\geq T^{\alpha}} (-1)^\nu t^{\frac{3}{4}}
\cos(2\pi\sqrt{t}\sqrt{\mu\nu}-\frac{\pi}{4})
\mu^{-\frac{5}{4}}\nu^{-\frac{1}{4}}
\widehat{\rho}_\epsilon(\frac{\mu+\nu}{2},\nu)\end{equation*}\begin{eqnarray}\ll
T^{\frac{3}{4}}\sum_{k\geq T^{\alpha}}
 k^{-m-\frac{1}{4}}\sum_{\mu |k ;
\mu>\sqrt{k}}\mu^{-1}\leq T^{\frac{3}{4}+2\gamma
m+\alpha(\frac{1}{4}-m+\delta)}.
\end{eqnarray}
Therefore, to have this tail bounded by $T^{\frac{3}{4}}$ we shall
choose $\alpha>\frac{2\gamma m}{m-1/4-\delta}$ and since we can
choose $m$ as large as we please, this inequality holds if we assume
$\alpha>2 \gamma$.
\end{proof}

To evaluate the third moment, we have
\begin{eqnarray}
\int_1^T F^{\epsilon}_H (t)^3 dt&=& \sum_{\mu_j,
\nu_j}(-1)^{\nu_1+\nu_2+\nu_3}\prod_{j=1}^3\left\{
\mu_j^{-\frac{5}{4}}\nu_j^{-\frac{1}{4}}
\widehat{\rho}_\epsilon(\frac{\mu_j+\nu_j}{2},\nu_j)\right\}
\nonumber\\&&\quad\quad\quad\times \int_1^T t^{9/4}
\prod_{j=1}^3\cos(2\pi\sqrt{t}\sqrt{\mu_j\nu_j}-\frac{\pi}{4})
 dt\nonumber\\
&=& \frac{1}{8}\sum_{\mu_j,
\nu_j}(-1)^{\nu_1+\nu_2+\nu_3}\prod_{j=1}^3\left\{
\mu_j^{-\frac{5}{4}}\nu_j^{-\frac{1}{4}}
\widehat{\rho}_\epsilon(\frac{\mu_j+\nu_j}{2},\nu_j)\right\}\nonumber\\
&&\quad\quad\quad\times \int_1^T t^{9/4} e^{\pi i\left(\pm(
2\sqrt{ t\mu_1\nu_1}-\frac{1}{4})\pm(2\sqrt{t
\mu_2\nu_2}-\frac{1}{4}) \pm(2\sqrt{t
\mu_3\nu_3}-\frac{1}{4})\right)} dt, \nonumber\end{eqnarray} where
$\pm$ means that we have a total of 8 terms, one for each choice
of $+$ or $-$ sign. Next, we show that all the indices for which
$\pm( 2\sqrt{ t\mu_1\nu_1}-\frac{1}{4})\pm(2\sqrt{t
\mu_2\nu_2}-\frac{1}{4}) \pm(2\sqrt{t \mu_3\nu_3}-\frac{1}{4})\neq
0$ lead to lower order terms. Without loss of generality, we
continue the proof by considering the following summation

\begin{align}S:=\sum_{\stackrel{\Delta\neq 0}{
\mu_1\nu_1\geq\mu_2\nu_2}}(-1)^{\nu_1+\nu_2+\nu_3}\prod_{j=1}^3\left\{
\mu_j^{-\frac{5}{4}}\nu_j^{-\frac{1}{4}}
\widehat{\rho}_\epsilon(\frac{\mu_j+\nu_j}{2},\nu_j)\right\}
\int_1^T t^{9/4} e^{2\pi i\sqrt{t}\,\Delta-\frac{\pi i}{4}}
dt,\label{sam1}\end{align} where $\Delta:=\sqrt{\mu_1\nu_1}+\sqrt{
\mu_2\nu_2}-\sqrt{ \mu_3\nu_3}$. Since $|\widehat{\rho}_\epsilon|$
is bounded above by $1$, we have

\begin{eqnarray*}
|S|\leq \sum_{\stackrel{\Delta\neq 0}{
\mu_1\nu_1\geq\mu_2\nu_2}}\mu_1^{-\frac{5}{4}}\nu_1^
{-\frac{1}{4}}\mu_2^{-\frac{5}{4}}
\nu_2^{-\frac{1}{4}}\mu_3^{-\frac{5}{4}}\nu_3^{-\frac{1}{4}}
\left| \int_1^T t^{9/4} e^{2\pi i\sqrt{t}\,
\Delta-\frac{\pi i}{4}} dt\right|.\label{sam2}\\
\nonumber\end{eqnarray*}
For fixed positive $\sigma$ and $\beta$
to be specified later, break the summation in ~(\ref{sam2}) in
three parts.

\vspace{.2in}

\noindent\emph{Case 1}: If $|\Delta | > (\mu_1\nu_1)^{1/2-\sigma}$
then using the integral estimate \begin{equation*}\left|\int_1^T
t^{9/4}e^{i \omega \sqrt{t}}dt\right|\leq 2\left
[\frac{t^{11/4}}{|\omega|}\right]_1^T \ll
\frac{T^{11/4}}{|\omega|},\end{equation*} we have

\begin{eqnarray*}
S_1:=\sum_{\stackrel{|\Delta|>(\mu_1\nu_1)^{1/2-\sigma}}{\stackrel{
\mu_1\nu_1\geq\mu_2\nu_2 \, ,\,
\mu_j>\nu_j>0}{\mu_j\nu_j<T^\alpha}}}\mu_1^{-\frac{5}{4}}\nu_1^{-\frac{1}{4}}
\mu_2^{-\frac{5}{4}}\nu_2^{-\frac{1}{4}}\mu_3^{-\frac{5}{4}}\nu_3^{-\frac{1}{4}}
\left| \int_1^T t^{9/4} e^{2\pi i\sqrt{t}\, \Delta-\frac{\pi
i}{4}} dt\right|\end{eqnarray*}
\begin{eqnarray}&\ll& T^{\frac{11}{4}}
\sum_{\stackrel{|\Delta|>(\mu_1\nu_1)^{1/2-\sigma}}{\stackrel{
\mu_1\nu_1\geq\mu_2\nu_2 \, ,\,
\mu_j>\nu_j>0}{\mu_j\nu_j<T^\alpha}}}
\mu_1^{-\frac{5}{4}}\nu_1^{-\frac{1}{4}}\mu_2^{-\frac{5}{4}}
\nu_2^{-\frac{1}{4}}\mu_3^{-\frac{5}{4}}\nu_3^{-\frac{1}{4}}\left|
\Delta\right|^{-1} \nonumber\end{eqnarray}
Applying the condition
on $\Delta$ we find

\begin{eqnarray}
S_1&\ll& T^{\frac{11}{4}} \sum_{\stackrel{
\mu_1\nu_1\geq\mu_2\nu_2 \, ,\,
\mu_j>\nu_j>0}{\mu_j\nu_j<T^\alpha}}
\mu_1^{-\frac{5}{4}}\nu_1^{-\frac{1}{4}}\mu_2^{-\frac{5}{4}}\nu_2^
{-\frac{1}{4}}\mu_3^{-\frac{5}{4}}\nu_3^{-\frac{1}{4}}(\mu_1\nu_1)^{-1/2+\sigma}
\nonumber\\
&= &T^{\frac{11}{4}}\sum_{\stackrel{ \mu_1\nu_1\geq\mu_2\nu_2 \, ,\,
\mu_j>\nu_j>0}{\mu_j\nu_j<T^\alpha}}
(\mu_1\nu_1)^{-3/4+\sigma}\mu_1^{-1}(\mu_2\nu_2)^{-1/4}\mu_2^{-1}(\mu_3\nu_3)^{-1/4}\mu_3^{-1}
\nonumber\\
&\ll& T^{\frac{11}{4}}\sum_{\stackrel{
\mu_j>\nu_j>0}{\mu_j\nu_j<T^\alpha}}
(\mu_1\nu_1)^{-1/2+\sigma+\delta}\mu_1^{-1}((\mu_3\nu_3)^{-1/4}\mu_3^{-1}
\label{11s1}\\
&\ll& T^{\frac{11}{4}+\delta}\sum_{0<m_j<T^\alpha}
m_1^{-1+\sigma}m_3^{-3/4}.\label{1s1}\end{eqnarray} To obtain
~(\ref{11s1}) and ~(\ref{1s1}) we have used the fact that

\begin{equation*}\sum_{\mu_j>\nu_j>0}(\mu_j\nu_j)^{-1/4}\mu_j^{-1}\ll\sum_{m_j}
m_j^{-3/4+\delta}\end{equation*} for $m_j:=\mu_j\nu_j$  and
arbitrary $\delta>0$.

Therefore,
\begin{equation}
 S_1=O\left( T^{11/4+\alpha/4+\delta+\sigma\alpha}\right),
\label{1}
\end{equation}
and to have $S_1=o(T^{13/4})$ we need the condition \begin{equation}\alpha(\frac{1}{4}+\sigma)
<\frac{1}{2}\label{1'}\end{equation} on
$\alpha$ and $\sigma$.

\vspace{.2in}

\noindent \emph{Case 2:} If
$|\Delta|\leq(\mu_1\nu_1)^{1/2-\sigma}$, then we can prove that
$\mu_3\nu_3$ has basically the same order of magnitude as
$\mu_1\nu_1$ and the number of the solutions for $\mu_3\nu_3$
satisfying $|\Delta| \leq(\mu_1\nu_1)^{1/2-\sigma}$ is bounded by
$ 1+5 |\Delta| \sqrt{\mu_1\nu_1}$. To prove these claims we note
that
\begin{equation*}
\mu_3\nu_3=\left(\sqrt{\mu_1\nu_1}+\sqrt{\mu_2\nu_2}\right)^2+\Delta^2\pm
2\Delta\left(\sqrt{\mu_1\nu_1}+\sqrt{\mu_2\nu_2}\right).
\end{equation*}
Therefore,

\begin{equation}\left|\mu_3\nu_3-\left(\sqrt{\mu_1\nu_1}+
\sqrt{\mu_2\nu_2}\right)^2\right|\leq
\Delta^2+4|\Delta|\sqrt{\mu_1\nu_1}\leq
5\left({\mu_1\nu_1}\right)^{1-\sigma},\label{2}
\end{equation}
which shows the first claim is true

\begin{equation}\frac{\mu_1\nu_1}{2}\leq\mu_3\nu_3\leq 9
\mu_1\nu_1.
\end{equation}
The second claim is also clear by looking
at ~(\ref{2}) which can be written
as

\begin{equation}\left|\mu_3\nu_3-\left(\sqrt{\mu_1\nu_1}+
\sqrt{\mu_2\nu_2}\right)^2\right|\leq
5|\Delta|\sqrt{\mu_1\nu_1}.\nonumber
\end{equation}
To consider the case when $|\Delta|\leq(\mu_1\nu_1)^{1/2-\sigma}$,
we divide it to two subcases: the first one is if $\Delta$ is
permitted to be very small, i.e. $|\Delta|\leq T^{-\beta}$. The
second case is if $ T^{-\beta}<|\Delta|\leq
(\mu_1\nu_1)^{1/2-\sigma}.$

\vspace{.2in}

\noindent \emph{Subcase 2.1:} If $|\Delta|\leq T^{-\beta}$, then
by using the trivial bound on the integral we find

\begin{eqnarray}S_2&:=&\sum_{\stackrel{0<|\Delta|\leq T^{-\beta}}{\stackrel{
\mu_1\nu_1\geq\mu_2\nu_2 \, ,\,
\mu_j>\nu_j>0}{\mu_j\nu_j<T^\alpha}}}\mu_1^{-\frac{5}{4}}\nu_1^
{-\frac{1}{4}}\mu_2^{-\frac{5}{4}}\nu_2^
{-\frac{1}{4}}\mu_3^{-\frac{5}{4}}\nu_3^{-\frac{1}{4}} \left|
\int_1^T t^{9/4} e^{2\pi i\sqrt{t}\, \Delta -\frac{\pi i}{4}}
dt\right| \nonumber\\ &\ll& T^{\frac{13}{4}}
\sum_{\stackrel{0<|\Delta|\leq T^{-\beta}}{\stackrel{
\mu_1\nu_1\geq\mu_2\nu_2 \, ,\,
\mu_j>\nu_j>0}{\mu_j\nu_j<T^\alpha}}}
\mu_1^{-\frac{5}{4}}\nu_1^{-\frac{1}{4}}\mu_2^{-\frac{5}{4}}
\nu_2^{-\frac{1}{4}}\mu_3^{-\frac{5}{4}}\nu_3^{-\frac{1}{4}} \, .
\nonumber\end{eqnarray} Using the fact that $\mu_3\nu_3$ has
basically the same order of magnitude as $\mu_1\nu_1$ and the
number of the solutions for $\mu_3\nu_3$ satisfying $|\Delta|
\leq(\mu_1\nu_1)^{1/2-\sigma}$ is bounded by $ 1+5 |\Delta|
\sqrt{\mu_1\nu_1}$ we get that

\begin{eqnarray}
&S_2&\ll T^{\frac{13}{4}} \sum_{\stackrel{\stackrel{0<|\Delta|\leq
T^{-\beta}}{\mu_1\nu_1\geq\mu_2\nu_2}}{\stackrel{\mu_j>\nu_j>0}{\mu_j\nu_j<T^\alpha}}}
\mu_1^{-\frac{5}{4}}\nu_1^{-\frac{1}{4}}\mu_2^{-\frac{5}{4}}
\nu_2^{-\frac{1}{4}}\left(\frac{\mu_1\nu_1}{2}\right)^{-\frac{3}{4}+\delta}
\left(1+5 |\Delta| \sqrt{\mu_1\nu_1} \right)
\nonumber\\
&\ll& T^{\frac{13}{4}} \sum_{\stackrel{\stackrel{0<|\Delta|\leq
T^{-\beta}}{\mu_1\nu_1\geq\mu_2\nu_2}}{\stackrel{\mu_j>\nu_j>0}{\mu_j\nu_j<T^\alpha}}}
(\mu_1\nu_1)^{-1+\delta}\mu_1^{-1}\mu_2^{-\frac{5}{4}}\nu_2^{-\frac{1}{4}}+
5T^{\frac{13}{4}-\beta}\sum_{{\stackrel{\stackrel{
\mu_1\nu_1\geq\mu_2\nu_2\,}{
\mu_j>\nu_j>0}}{\mu_j\nu_j<T^\alpha}}}(\mu_1\nu_1)^
{-\frac{1}{2}+\delta}\mu_1^{-1}\mu_2^{-\frac{5}{4}}\nu_2^{-\frac{1}{4}}
\nonumber\\
&\ll& T^{\frac{13}{4}} \sum_{{\stackrel{
\mu_1>\nu_1>0}{T^{2\beta/3}\leq\mu_1\nu_1<T^\alpha}}}
(\mu_1\nu_1)^{-\frac{3}{4}+\delta}\mu_1^{-1}+
5T^{\frac{13}{4}-\beta}\sum_{{\stackrel{
\mu_1>\nu_1>0}{\mu_1\nu_1<T^\alpha}}}(\mu_1\nu_1)^{-\frac{1}{4}+\delta}\mu_1^{-1}
\nonumber\\
&\ll& T^{\frac{13}{4}} \sum_{T^{2\beta/3}\leq
m_1}m_1^{-\frac{5}{4}+\delta}+
5T^{\frac{13}{4}-\beta}\sum_{0<m_1<T^\alpha}m_1^{-\frac{3}{4}+\delta}
\nonumber \, .
\end{eqnarray} Therefore,

\begin{equation}S_2=O \left(T^{\frac{13}{4}-\frac{\beta}{6}+\delta}+
T^{\frac{13}{4}-\beta+\frac{\alpha}{4}+\delta}\right),\label{3}
\end{equation}
and to have $S_2=o(T^{\frac{13}{4}})$ we need the
condition
\begin{equation}
\frac{\alpha}{4}-\beta<0.\label{2'}
\end{equation}
on $\alpha$ and $\beta$ to be satisfied.

\vspace{.2in}

\noindent \emph{Subcase 2.2:} Finally let us consider the last
case, that is when
$T^{-\beta}<|\Delta|\leq(\mu_1\nu_1)^{1/2-\sigma}$. We have

\begin{eqnarray*}
S_3&:=&\sum_{\stackrel{T^{-\beta}<|\Delta|\leq(\mu_1\nu_1)^{1/2-\sigma}}{\stackrel{
\mu_1\nu_1\geq\mu_2\nu_2 \, ,\,
\mu_j>\nu_j>0}{\mu_j\nu_j<T^\alpha}}}\mu_1^{-\frac{5}{4}}\nu_1^
{-\frac{1}{4}}\mu_2^{-\frac{5}{4}}\nu_2^{-\frac{1}{4}}\mu_3^{-\frac{5}{4}}\nu_3^{-\frac{1}{4}}
\left| \int_1^T t^{9/4} e^{2\pi i\sqrt{t}\, \Delta -\frac{\pi
i}{4}} dt\right|\\&\ll& T^{\frac{11}{4}}
\sum_{\stackrel{T^{-\beta}<|\Delta|\leq(\mu_1\nu_1)^{1/2-\sigma}}{\stackrel{
\mu_1\nu_1\geq\mu_2\nu_2 \, ,\,
\mu_j>\nu_j>0}{\mu_j\nu_j<T^\alpha}}}
\mu_1^{-\frac{5}{4}}\nu_1^{-\frac{1}{4}}\mu_2^{-\frac{5}{4}}
\nu_2^{-\frac{1}{4}}\mu_3^{-\frac{5}{4}}\nu_3^{-\frac{1}{4}}\left|
\Delta\right|^{-1} \nonumber   \, .
\end{eqnarray*} Like before,
we use the fact that $\mu_3\nu_3$ has basically the same order of
magnitude as $\mu_1\nu_1$ and the number of the solutions for
$\mu_3\nu_3$ satisfying $|\Delta| \leq(\mu_1\nu_1)^{1/2-\sigma}$
is bounded by $ 1+5 |\Delta| \sqrt{\mu_1\nu_1}$ to write

\begin{eqnarray}
S_3&\ll& T^{\frac{11}{4}}
\sum_{\stackrel{T^{-\beta}<|\Delta|\leq(\mu_1\nu_1)^{1/2-\sigma}}
{\stackrel{ \mu_1\nu_1\geq\mu_2\nu_2 \, ,\,
\mu_j>\nu_j>0}{\mu_j\nu_j<T^\alpha}}}
\mu_1^{-\frac{5}{4}}\nu_1^{-\frac{1}{4}}\mu_2^{-\frac{5}{4}}
\nu_2^{-\frac{1}{4}}\left(\frac{\mu_1\nu_1}{2}\right)^{-\frac{3}{4}+\delta}
\left(1+5 |\Delta| \sqrt{\mu_1\nu_1} \right)|\Delta|^{-1}
\nonumber\\
&\ll&
T^{\frac{11}{4}}\sum_{\stackrel{T^{-\beta}<|\Delta|\leq(\mu_1\nu_1)^{1/2-\sigma}}{\stackrel{
\mu_1\nu_1\geq\mu_2\nu_2 \, ,\,
\mu_j>\nu_j>0}{\mu_j\nu_j<T^\alpha}}}
(\mu_1\nu_1)^{-1+\delta}\mu_1^{-1}(\mu_2\nu_2)^{-1/4}\mu_2^{-1}|\Delta|^{-1}
\nonumber\\&& + \quad T^{\frac{11}{4}}\sum_{\stackrel{
\mu_1\nu_1\geq\mu_2\nu_2 \, ,\,
\mu_j>\nu_j>0}{\mu_j\nu_j<T^\alpha}}
(\mu_1\nu_1)^{-1/2+\delta}\mu_1^{-1}(\mu_2\nu_2)^{-1/4}\mu_2^{-1}
\nonumber\\
&\ll& T^{\frac{11}{4}+\beta}\sum_{\stackrel{
\mu_1>\nu_1>0}{\mu_1\nu_1<T^\alpha}}
(\mu_1\nu_1)^{-3/4+\delta}\mu_1^{-1}+T^{\frac{11}{4}}\sum_{\stackrel{
\mu_1>\nu_1>0}{\mu_1\nu_1<T^\alpha}}
(\mu_1\nu_1)^{-1/4+\delta}\mu_1^{-1}. \nonumber\end{eqnarray}
Therefore
\begin{eqnarray}
S_3=O\left(T^{\frac{11}{4}+\beta+\delta}+T^{11/4+\alpha/4+\delta}\right),
\label{4}\end{eqnarray}  and to have $S_3=o(T^{\frac{13}{4}})$ we
require

\begin{equation}\alpha<2 \quad\hbox{ and }\quad
\beta<\frac{1}{2}.\label{3'}
\end{equation}
Therefore, taking all
the conditions from Remark \ref{rem2}, Lemma \ref{lem},
~(\ref{1'}), ~(\ref{2'}) and ~(\ref{3'}) together, we have proved
that by making an arbitrary choice for $\alpha$, $\beta$ and
$\sigma$ satisfying

\begin{equation}\frac{3}{2}<\alpha<4\beta<2 \quad\hbox{ and
}\quad 0<\sigma<\frac{1}{2\alpha}-\frac{1}{4},
\end{equation} we
have

\begin{align}
\int_1^T F^{\epsilon}_H (t)^3 dt&=&
\frac{3}{8}\sum_{\Delta=0}(-1)^{\nu_1+\nu_2+\nu_3}\mu_1^
{-\frac{5}{4}}\nu_1^{-\frac{1}{4}}\mu_2^{-\frac{5}{4}}
\nu_2^{-\frac{1}{4}}\mu_3^{-\frac{5}{4}}\nu_3^{-\frac{1}{4}}\int_1^T
2\, t^{9/4} \cos(\frac{\pi}{4}) dt \nonumber\\&&
\widehat{\rho}_\epsilon(\frac{\mu_1+\nu_1}{2},\nu_1)
\widehat{\rho}_\epsilon(\frac{\mu_2+\nu_2}{2},\nu_2)
\widehat{\rho}_\epsilon(\frac{\mu_3+\nu_3}{2},\nu_3)
+O(|S|)\nonumber
\\&=& \frac{3\sqrt{2}}{26}\, T^{ 13/4}
\sum_{\Delta=0}(-1)^{\nu_1+\nu_2+\nu_3}\mu_1^{-\frac{5}{4}}\nu_1^{-\frac{1}{4}}
\mu_2^{-\frac{5}{4}}\nu_2^{-\frac{1}{4}}\mu_3^{-\frac{5}{4}}\nu_3^{-\frac{1}{4}}
\quad\quad\quad\quad\quad\quad
\label{F}\\&&\widehat{\rho}_\epsilon(\frac{\mu_1+\nu_1}{2},\nu_1)
\widehat{\rho}_\epsilon(\frac{\mu_2+\nu_2}{2},\nu_2)
\widehat{\rho}_\epsilon(\frac{\mu_3+\nu_3}{2},\nu_3)+
O(|S|),\nonumber
\end{align}
where $O(|S|)=O(S_1+S_2+S_3)=o(T^{13/4})$. Next we split the
summation in ~(\ref{F}) into the pieces where $\mu_3< T^{1/4}$ and
$\mu_3 \geq T^{1/4}$. We claim that the piece where $\mu_3 \geq
T^{1/4}$ is residual. To see this, note that if $
\sqrt{\mu_1\nu_1}+\sqrt{ \mu_2\nu_2}=\sqrt{ \mu_3\nu_3}$ then
there exists integers $k$, $m_1$, $m_2$ and $m_3$ such that
$\mu_j\nu_j=k m_j^2$ and $m_1+m_2=m_3$. Therefore,
\begin{align}
&&T^{ \frac{13}{4}} \sum_{ \stackrel{\stackrel{0<\nu_j<\mu_j;}{
\nu_j\equiv\mu_j \imod{2}}}{\Delta=0; \: \mu_3\geq
T^{1/4}}}\mu_1^{-\frac{5}{4}}\nu_1^{-\frac{1}{4}}
\mu_2^{-\frac{5}{4}}\nu_2^{-\frac{1}{4}}\mu_3^{-\frac{5}{4}}\nu_3^{-\frac{1}{4}}
\leq T^3 \sum_{ \stackrel{0<\nu_j<\mu_j}{\Delta=0
}}\mu_1^{-\frac{5}{4}}\nu_1^{-\frac{1}{4}}
\mu_2^{-\frac{5}{4}}\nu_2^{-\frac{1}{4}}\mu_3^{-\frac{1}{4}}\nu_3^{-\frac{1}{4}}
\nonumber\\&&\leq T^3 \sum_{m_1>0; m_2>0;
k>0}k^{-\frac{3}{4}}m_1^{-\frac{1}{2}}m_2^{-\frac{1}{2}}(m_1+m_2)^{-\frac{1}{2}}
\sum_{\mu_j|km_j^2; \mu_j>k^{1/2}m_j}\mu_1^{-1}\mu_2^{-1}\nonumber\\
&&\leq T^3 \sum_{m_1>0; m_2>0;
k>0}k^{-\frac{7}{4}}m_1^{-\frac{3}{2}}m_2^{-\frac{3}{2}}(m_1+m_2)^{-\frac{1}{2}}
d(k m_1^2)d(k m_2^2)\ll T^3.\label{res}
\end{align}
On the other hand, if $\mu_3<T^{1/4}$ then for $j=1, 2$ we have
$\mu_j\leq \mu_j\nu_j\leq \mu_3\nu_3\leq T^{1/2}$. Since
$\epsilon=T^{-\gamma}\ll T^{-3/4}$, we have that $\epsilon\nu_j<
\epsilon\mu_j< T^{-1/4}$ for $j=1, 2, 3$. Therefore, we
expand the functions
$\widehat{\rho}_\epsilon(\frac{\mu_j+\nu_j}{2},\nu_j)$ in Taylor series around the
point $(0, 0)$ and use ~(\ref{res}), so that we can evaluate the
summation in ~(\ref{F}) as

\begin{align}
\int_1^T F^{\epsilon}_H (t)^3 dt&=&\frac{3\sqrt{2}}{26}\:\:
T^{\frac{13}{4}} \sum_{ \stackrel{\stackrel{0<\nu_j<\mu_j;}{
\nu_j\equiv\mu_j \imod{2}}}{\Delta=0}}(-1)^
{\nu_1+\nu_2+\nu_3}\mu_1^{-\frac{5}{4}}\nu_1^{-\frac{1}{4}}
\mu_2^{-\frac{5}{4}}\nu_2^{-\frac{1}{4}}\mu_3^{-\frac{5}{4}}\nu_3^{-\frac{1}{4}}
+ O(|S|).\nonumber
\end{align}
Repeating a similar argument for the second summation in
$R_H^\epsilon(t)$ given in ~(\ref{repeat}), we have proved that

\begin{eqnarray}
\int_1^T R^{\epsilon}_H (t)^3 dt&=& b_3\, T^{ 13/4}
+O(|S|),\label{cc}
\end{eqnarray}
where $b_3$ is the constant defined by

\begin{eqnarray}
b_3&=&\frac{3\sqrt{2}}{26\pi^3} \sum_{ \stackrel{0<\nu_j<\mu_j;\:
{\nu_j\equiv\mu_j\imod{2}}}{\sqrt{\mu_1\nu_1}+\sqrt{
\mu_2\nu_2}=\sqrt{
\mu_3\nu_3}}}(-1)^{\nu_1+\nu_2+\nu_3}\mu_1^{-\frac{5}{4}}\nu_1^{-\frac{1}{4}}
\mu_2^{-\frac{5}{4}}\nu_2^{-\frac{1}{4}}\mu_3^{-\frac{5}{4}}\nu_3^{-\frac{1}{4}}
\nonumber\\&& +\frac{3\sqrt{2}}{208\pi^3} \sum_{ \stackrel{0<\nu_j
}{\nu_1+\nu_2=\nu_3}}(-1)^{\nu_1+\nu_2+\nu_3}\nu_1^{-\frac{3}{2}}
\nu_2^{-\frac{3}{2}}\nu_3^{-\frac{3}{2}} \nonumber\\&&
+\frac{9\sqrt{2}}{52\pi^3}\sum_{ \stackrel{0<\nu_j<\mu_j;\:
\nu_j\equiv\mu_j \imod{2}}{\sqrt{\mu_1\nu_1}+\sqrt{
\mu_2\nu_2}=\nu_3}}(-1)^{\nu_1+\nu_2+\nu_3}
\mu_1^{-\frac{5}{4}}\nu_1^{-\frac{1}{4}}
\mu_2^{-\frac{5}{4}}\nu_2^{-\frac{1}{4}}\nu_3^{-\frac{3}{2}}
\nonumber\\&&+ \frac{9\sqrt{2}}{104\pi^3} \sum_{
\stackrel{0<\nu_j<\mu_j;\:
\nu_j\equiv\mu_j\imod{2}}{\sqrt{\mu_3\nu_3}=\nu_1+\nu_2}}(-1)^{\nu_1+\nu_2+\nu_3}
\nu_1^{-\frac{3}{2}}\mu_2^{-\frac{5}{4}}\nu_2^{-\frac{1}{4}}
\mu_3^{-\frac{5}{4}}\nu_3^{-\frac{1}{4}}.\label{c}
\end{eqnarray}
Now to prove that $b_3$ is a positive constant, we show that every
summation on the right hand side of ~(\ref{c}) is positive. Since
every one of these summations is a
 special case of the first sum where one impose an extra condition that
$\mu_j=\nu_j$ for one or two of $j$s, we may restrict our
attention only at the first sum. From
$\sqrt(\mu_1\nu_1)=\sqrt(\mu_2\nu_2)+\sqrt(\mu_3\nu_3)$, we get
that for some square free $k$ and integers $m_1, m_2$ and $m_3$
satisfying $m_1+m_2=m_3$, we have $\mu_j\nu_j=m_j^2 k$. From the
conditions that $\nu_j\equiv\mu_j \imod{2}$ and also that $k$ is
square free, we can see that $\nu_j$ and $m_j$ should have the
same parity. Therefore from $m_1+m_2=m_3$, we get
$(-1)^{\nu_1+\nu_2+\nu_3}=1$. This proves that the first series is
positive and similarly the other three are also positive. So the
constant $b_3$ is strictly positive.

The last step in the proof of the Theorem \ref{thm} is to use
Lemma \ref{lem1} to get rid of the mollification in
$\rho_\epsilon$ and prove the third moment estimate for $R_H(t)$,
which is the remainder term corresponding to type II eigenvalues.
From Lemma \ref{lem1} by choosing $\epsilon=T^{-\gamma}$ and
$1<t<T$ for $\gamma>3/4$ we find

\begin{eqnarray}
N^{\epsilon}_H(t-c_\gamma T^{1-\gamma})\leq N_H(2\pi t)\leq
N^{\epsilon}_H (t+c_\gamma T^{1-\gamma}).\label{11}
\end{eqnarray}
From Proposition \ref{prop1} we have
\begin{eqnarray}N^{\epsilon}_H(t \,_{-}^{+}\, c_\gamma T^{1-\gamma})=
\frac{2}{3}t^{\frac{3}{2}}-\frac{1}{2}t+R^{\epsilon}_H(t
\,_{-}^{+}\, c_\gamma T^{1-\gamma}) +O(T^{\frac{3}{2}-\gamma}).
\label{22}\end{eqnarray}
Therefore, from ~(\ref{11}) and
~(\ref{22}) we find

\begin{equation}
R^{\epsilon}_H(t-c_\gamma
T^{1-\gamma})+O(T^{\frac{3}{2}-\gamma})\leq R_H(2\pi t)\leq
R^{\epsilon}_H (t+c_\gamma
T^{1-\gamma})+O(T^{\frac{3}{2}-\gamma}),\label{33}
\end{equation}
where

\begin{eqnarray}
R_H(2\pi t):=N_H(2\pi t)-\frac{2}{3}t^{\frac{3}{2}}+\frac{1}{2}t.
\end{eqnarray}
Taking the third moment of the right hand side of ~(\ref{33}), we
obtain

\begin{eqnarray}
\int_1^T R_H(2\pi t)^3 dt&\leq& \int_1^T R^{\epsilon}_H
(t+c_\gamma T^{1-\gamma})^3 dt+O(T^{\frac{3}{2}-\gamma})\int_1^T
R^{\epsilon}_H (t+c_\gamma T^{1-\gamma})^2
dt\nonumber\\&&+O(T^{3-2\gamma})\int_1^T R^{\epsilon}_H
(t+c_\gamma T^{1-\gamma})dt+O(T^{\frac{11}{2}-3\gamma}).\label{44}
\end{eqnarray}
Next we use the second moment result proved in \cite{KT} which
states that
\begin{eqnarray}
\int_1^T {R^{\epsilon}_H(t)}^2 dt =d_2
T^{\frac{5}{2}}+o(T^{\frac{5}{2}}),\label{55}
\end{eqnarray}
and, applying a simple H\"{o}lder inequality, shows that
\begin{eqnarray}
\int_1^T |{R^{\epsilon}_H(t)}| dt =O( T^{\frac{7}{4}}).\label{66}
\end{eqnarray}
Applying the results from ~(\ref{55}) and ~(\ref{66}) back in
~(\ref{44}) proves that
\begin{eqnarray}
\int_1^T R_H(2\pi t)^3 dt&\leq& \int_1^T R^{\epsilon}_H
(t+c_\gamma T^{1-\gamma})^3
dt+O(T^{4-\gamma})+O(T^{\frac{19}{4}-2\gamma})
+O(T^{\frac{11}{2}-3\gamma})\nonumber\\&\leq& \int_{1+c_\gamma
T^{1-\gamma}}^{T+c_\gamma T^{1-\gamma}} R^{\epsilon}_H (t)^3
dt+O(T^{4-\gamma}).\nonumber
\end{eqnarray}
Finally, we use the result from ~(\ref{cc}) and prove that
\begin{eqnarray}
\int_1^T R_H(2\pi t)^3 dt&\leq& b_3
T^{\frac{13}{4}}+O(|S|)+O(T^{4-\gamma}).\label{rh1}
\end{eqnarray}
Similarly, from the first inequality in ~(\ref{33}) we find

\begin{eqnarray}
\int_1^T R_H(2\pi t)^3 dt&\geq& b_3
T^{\frac{13}{4}}+O(|S|)+O(T^{4-\gamma}).\label{rh2}
\end{eqnarray}
From ~(\ref{rh1}) and ~(\ref{rh2}) and using $|S|\leq S_1+S_2+S_3$
we have

\begin{eqnarray}
\int_1^T R_H(2\pi t)^3 dt= b_3\,
T^{\frac{13}{4}}+O(S_1)+O(S_2)+O(S_3)+O(T^{4-\gamma}).\label{rh}
\end{eqnarray}
Solving an optimization problem on the parameters $\gamma$,
$\alpha$, $\beta$ and $\sigma$ satisfying the conditions

\begin{equation}\frac{3}{2}<2\gamma<\alpha<4\beta<2 \quad\hbox{ and
}\quad 0<\sigma<\frac{1}{2\alpha}-\frac{1}{4},
\end{equation}
we find that for an arbitrary small positive $\delta '<1/7$ and
$\gamma=11/14$, $\alpha=11/7+\delta '$, $\beta=3/7$ and
$\sigma=\delta '/4$, we obtain

\begin{eqnarray}
\int_1^T R_H(2\pi t)^3 dt= b_3\,
T^{\frac{13}{4}}+O_\delta(T^{\frac{13}{4}-\frac{1}{28}+\delta}),
\end{eqnarray}
for any arbitrary small positive $\delta$. This shows that

\begin{eqnarray}
\int_1^T R(t)^3 dt= d_3\,
T^{\frac{13}{4}}+O_\delta(T^{\frac{13}{4}-\frac{1}{28}+\delta}),
\end{eqnarray}
where $d_3=(2\pi)^{-9/4} b_3$. This completes the proof of Theorem
\ref{thm}.

\end{document}